\DeclareMathOperator{\ols}{\text{\scshape{o}}}
\DeclareMathOperator{\els}{\text{\scshape{e}}}
\DeclareMathOperator{\rols}{\text{\scshape{ro}}}
\DeclareMathOperator{\rels}{\text{\scshape{re}}}
\DeclareMathOperator{\cols}{\text{\scshape{co}}}
\DeclareMathOperator{\cels}{\text{\scshape{ce}}}
\DeclareMathOperator{\sols}{\text{\scshape{so}}}
\DeclareMathOperator{\sels}{\text{\scshape{se}}}
\newcommand\rowperm[1]{\sigma^{\textrm{row}}_{#1}}
\newcommand\colperm[1]{\sigma^{\textrm{col}}_{#1}}
\newcommand\symperm[1]{\sigma^{\textrm{sym}}_{#1}}
\def\sign{\zeta}
\def\rowpar{\pi_{\textrm{row}}}
\def\colpar{\pi_{\textrm{col}}}
\def\sympar{\pi_{\textrm{sym}}}
\def\rowpar{\pi_{\textrm{row}}}
\def\colpar{\pi_{\textrm{col}}}
\def\sympar{\pi_{\textrm{sym}}}
\newtheorem{thrm}{Theorem}
\newtheorem{corol}[thrm]{Corollary}
\newtheorem{lemma}[thrm]{Lemma}
\def\tspacer{{\vrule height 2.75ex width 0ex depth0ex}}
\def\bspacer{{\vrule height 0ex width 0ex depth1.2ex}}
\def\clap#1{\hbox to 0pt{\hss #1\hss}}
\def\dfrac#1#2{\lower0.15ex\hbox{\large$\frac{#1}{#2}$}} 
\renewcommand{\geq}{\geqslant}
\renewcommand{\ge}{\geqslant}
\renewcommand{\le}{\leqslant}
\def\iwb{\rightarrow\infty}
\def\sym{\mathcal{S}}
\def\eref#1{$(\ref{#1})$}
\def\lref#1{Lemma~$\ref{#1}$}
\def\tref#1{Theorem~$\ref{#1}$}
\def\cyref#1{Corollary~$\ref{#1}$}
\def\Tref#1{Table~$\ref{#1}$}
\def\Z{\mathbb{Z}}
\begin{document}

\runningtitle{Parity of Latin squares}
\title{There are asymptotically the same number
of Latin squares of each parity}

\author[1]{Nicholas J. Cavenagh}
\address[1]{Department of Mathematics,
University of Waikato,
Private Bag 3105,
Hamilton, New Zealand.
\email{nickc@waikato.ac.nz}
}

\cauthor
\author[2]{Ian M. Wanless}

\address[2]{School of Mathematical Sciences,
Monash University,
Vic 3800, Australia.
\email{ian.wanless@monash.edu}
}

\authorheadline{N. J. Cavenagh and I. M. Wanless}


\support{Research supported by ARC grant FT110100065.}

\begin{abstract}
A Latin square is reduced if its first row and column are in natural
order.  For Latin squares of a particular order $n$ there are four
possible different parities.  We confirm a conjecture of Stones and
Wanless by showing asymptotic equality between the numbers of reduced
Latin squares of each possible parity as the order $n\iwb$.
\end{abstract}

\classification{05B15}

\keywords{Latin square; parity; Alon-Tarsi conjecture; row cycle}

\maketitle

\section{Introduction}\label{s:intro} 

The parity of permutations plays a fundamental role in group theory.
Latin squares can be thought of as two dimensional permutations and
they also have a notion of parity. A Latin square has three attributes
each of which can be even or odd, although any two of these attributes
determines the third. There are thus four different parities that
Latin squares of a given order may have. These parities account, for
example, for the fragmentation of switching graphs \cite{KMOW14,Wan04}
and the failure of certain topological biembeddings \cite{LDGG09}.
They can also assist in diagnosing symmetries of Latin squares
\cite{Kot12}.

Unlike what happens for permutation groups, there can be different
numbers of Latin squares of each parity. This difference is central to
a famous conjecture by Alon-Tarsi~\cite{AT92} which has ramifications
well beyond its apparent scope \cite{HR94,KL15}.  Nevertheless,
numerical evidence \cite{KMOW14,SW12,Wan04} suggests that within
several natural classes of Latin squares there are very close to the
same number of each parity.  The present note and 
\cite{Alp16} are the first to prove parities are asymptotically 
equinumerous (although \cite{SW12} did show a weaker result in this
direction). An advantage of the present work over \cite{Alp16} is
that we prove a non-trivial result for all orders, whilst \cite{Alp16}
only applies to even orders.

A \emph{Latin square} of order $n$ is an $n \times n$ array
of $n$ symbols such that each symbol occurs exactly once
in each row and exactly once in each column.  We will take the symbol
set to be $[n]:=\{1,2,\dots,n\}$, matching the row and column indices.
A Latin square is \emph{normalised} if the first row is
$(1,2,\dots,n)$.  A Latin square is \emph{reduced} if the first row
is $(1,2,\dots,n)$ and the first column is $(1,2,\dots,n)^T$.  A
Latin square $L=(l_{ij})$ is \emph{unipotent} if
$l_{11}=l_{22}=\dots=l_{nn}$.

Suppose $P$ is a property of Latin squares of order $n$.  Let $L^P_n$,
$R^P_n$ and $U^P_n$ be the numbers respectively of Latin squares,
reduced Latin squares and normalised unipotent Latin squares of order
$n$ with property $P$.  If $P$ is omitted we count the whole class.


Let $\sym_n$ denote the permutations of $[n]$ and $\sign:\sym_n\mapsto\Z_2$
the usual sign homomorphism with kernel the alternating group.
Given a Latin square $L=(l_{ij})$ of order $n$, we can identify the
following $3n$ permutations in $\sym_n$.  For all $i\in[n]$ define
$\rowperm{i}$ by $\rowperm{i}(j)=l_{ij}$.  For all $j \in [n]$ define
$\colperm{j}$ by $\colperm{j}(i)=l_{ij}$.  For all $\ell \in [n]$
define $\symperm{\ell}$ such that $\symperm{\ell}(i)$ is equal to the
$j$ for which $l_{ij}=\ell$.  We call $\rowpar:=\sum_i
\sign(\rowperm{i})$, $\,\colpar:=\sum_j \sign(\colperm{j})$ and
$\sympar:=\sum_{\ell} \sign(\symperm{\ell})$ the \emph{row-parity},
\emph{column-parity} and \emph{symbol-parity} of $L$, respectively.  
A Latin square is called \emph{even} or \emph{odd} if
$\rowpar+\colpar\equiv 0$ or $1$ mod $2$, respectively.  
A Latin square is
called \emph{row-even} or \emph{row-odd} if $\rowpar\equiv0$ or
$1$, respectively.  A Latin square is called
\emph{column-even} or \emph{column-odd} if $\colpar\equiv0$ or
$1$, respectively.  A Latin square is called
\emph{symbol-even} or \emph{symbol-odd} if $\sympar\equiv0$ or
$1$, respectively.  We define the properties:
\begin{multicols}{2}{
\begin{itemize}
  \item[$\els$]= ``is an even Latin square''
  \item[$\ols$]= ``is an odd Latin square''
  \item[$\rels$]= ``is a row-even Latin square''
  \item[$\rols$]= ``is a row-odd Latin square''
  \item[$\cels$]= ``is a column-even Latin square''
  \item[$\cols$]= ``is a column-odd Latin square''
  \item[$\sels$]= ``is a symbol-even Latin square''
  \item[$\sols$]= ``is a symbol-odd Latin square''
\end{itemize}
}
\end{multicols}

We define the \emph{parity} of a Latin square $L$ to be the ordered triple
$\pi=\rowpar \colpar \sympar$. Writing $\pi$ as a superscript denotes that
we are restricting to Latin squares with parity $\pi$.
Some of the basic relationships that are
proved in \cite{SW12} are summarised in \Tref{T:relat}.

\begin{table}
\begin{center}
\begin{tabular}{|l|}
\hline\tspacer
If $n \equiv 0$ or $1 \pmod 4$ \bspacer\\
\hline\tspacer
$R^{\els}_n=R^{\sels}_n=R^{000}_n+R^{110}_n$ \bspacer\\
$R^{\ols}_n=R^{\sols}_n=R^{011}_n+R^{101}_n$ \bspacer\\
\hline\tspacer
$U^{\els}_n=R^{\cels}_n=R^{000}_n+R^{101}_n$ \bspacer\\
$U^{\ols}_n=R^{\cols}_n=R^{011}_n+R^{110}_n$ \bspacer\\
\hline\tspacer
$R^{\rels}_n=R^{000}_n+R^{011}_n=U^{\els}_n$ \bspacer\\
$R^{\rols}_n=R^{101}_n+R^{110}_n=U^{\ols}_n$ \bspacer\\
\hline
\tspacer\tspacer
$R_n^{111}=R_n^{100}=R_n^{010}=R_n^{001}=0$ \bspacer\\
$R_n^{011}=R_n^{101}$ \bspacer\\
$R_n^{011}=R_n^{101}=R_n^{110}$ when $n$ is even \bspacer\\
\hline
\end{tabular}
\quad
\begin{tabular}{|l|}
\hline\tspacer
If $n \equiv 2$ or $3 \pmod 4$ \bspacer\\
\hline\tspacer
$R^{\els}_n=R^{\sols}_n=R^{111}_n+R^{001}_n$ \bspacer\\
$R^{\ols}_n=R^{\sels}_n=R^{100}_n+R^{010}_n$ \bspacer\\
\hline\tspacer
$U^{\els}_n=R^{\cols}_n=R^{111}_n+R^{010}_n$ \bspacer\\
$U^{\ols}_n=R^{\cels}_n=R^{100}_n+R^{001}_n$ \bspacer\\
\hline\tspacer
$R^{\rols}_n=R^{111}_n+R^{100}_n=U^{\els}_n$ \bspacer\\
$R^{\rels}_n=R^{010}_n+R^{001}_n=U^{\ols}_n$ \bspacer\\
\hline
\tspacer\tspacer
$R_n^{000}=R_n^{011}=R_n^{101}=R_n^{110}=0$ \bspacer\\
$R_n^{100}=R_n^{010}$ \bspacer\\
$R_n^{100}=R_n^{010}=R_n^{001}$ when $n$ is even \bspacer\\
\hline
\end{tabular}
\caption{\label{T:relat}Table of identities.}
\end{center}
\end{table}

We use standard `$\sim$', `little-$o$', `big-$O$' and `big-$\Omega$' 
asymptotic notation, always with the order of our Latin squares
$n\iwb$.
The aim of this note is to confirm a conjecture from \cite{SW12}
by proving the following result.

\begin{thrm}\label{t:asymequal}
As $n\iwb$,
\begin{align*}
\begin{rcases}
\;L_n^{000}\sim L_n^{011}\sim L_n^{101}\sim L_n^{110} \sim \dfrac14 L_n\\
R_n^{000}\sim R_n^{011}\sim R_n^{101}\sim R_n^{110} \sim \dfrac14 R_n\\
U_n^{000}\sim U_n^{011}\sim U_n^{101}\sim U_n^{110} \sim \dfrac14 U_n\\
\end{rcases}&\hbox{\rm for }n\equiv0,1\pmod 4,\\
\begin{rcases}
\;L_n^{111}\sim L_n^{100}\sim L_n^{010}\sim L_n^{001} \sim \dfrac14 L_n\\
R_n^{111}\sim R_n^{100}\sim R_n^{010}\sim R_n^{001} \sim \dfrac14 R_n\\
U_n^{111}\sim U_n^{100}\sim U_n^{010}\sim U_n^{001} \sim \dfrac14 U_n\\
\end{rcases}&\hbox{\rm for }n\equiv2,3\pmod 4,\\
L_n^{\els}\sim L_n^{\ols}\sim 
L_n^{\rels}\sim L_n^{\rols}\sim
L_n^{\cels}\sim L_n^{\cols}&\sim 
L_n^{\sels}\sim L_n^{\sols}\sim \dfrac12 L_n,\\
R_n^{\els}\sim R_n^{\ols}\sim 
R_n^{\rels}\sim R_n^{\rols}\sim
R_n^{\cels}\sim R_n^{\cols}&\sim 
R_n^{\sels}\sim R_n^{\sols}\sim \dfrac12 R_n,\\
U_n^{\els}\sim U_n^{\ols}\sim 
U_n^{\rels}\sim U_n^{\rols}\sim
U_n^{\cels}\sim U_n^{\cols}&\sim 
U_n^{\sels}\sim U_n^{\sols}\sim \dfrac12 U_n.
\end{align*}
\end{thrm}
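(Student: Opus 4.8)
The plan is to reduce everything to a single asymptotic comparison between reduced squares and then obtain that comparison by a switching/double-counting argument backed by a concentration estimate. Concretely, I would first prove that
\[
R_n^{\pi}\sim R_n^{\pi\oplus(0,1,1)}
\]
for every admissible parity $\pi$, where $\oplus$ is addition in $\Z_2^3$ applied to the triple $\pi=\rowpar\colpar\sympar$. Granting this, \Tref{T:relat} finishes the reduced case: for each residue of $n$ modulo $4$ the four admissible $3$-digit classes partition $R_n$, and combining the exact equalities of \Tref{T:relat} with the shift $\oplus(0,1,1)$ links all four admissible classes into a single chain of asymptotic equalities, so a one-line linear computation yields $R_n^{\pi}=\tfrac14R_n+o(R_n)\sim\tfrac14R_n$ for every admissible $\pi$. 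The aggregate $\els/\ols$, row-, column- and symbol-parity statements for $R_n$ then follow by summing the relevant pairs in \Tref{T:relat}, and the $L_n$ and $U_n$ statements follow from the $R_n$ statements via the identities of \Tref{T:relat} and the normalisation count $L_n=n!\,(n-1)!\,R_n$, keeping track of how $\sign$ transforms under row, column and symbol permutations.

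The shift $\oplus(0,1,1)$ is realised by a \emph{row-cycle switch}. Fix a reduced square $L$ and rows $r,s\in\{2,\dots,n\}$, and form the fixed-point-free permutation $\gamma=\rowperm{s}(\rowperm{r})^{-1}$ of the symbols; its cycles are the row cycles of the pair $(r,s)$. For a cycle $C$ of $\gamma$ whose supporting columns avoid column $1$, exchanging the entries of rows $r$ and $s$ in exactly the columns of $C$ yields another reduced square $L'$ (row $1$ and column $1$ are untouched, and the Latin property is preserved because $C$ is a cycle). Tracking signs: this operation multiplies each of $\rowperm{r},\rowperm{s}$ by a $|C|$-cycle, so $\rowpar$ changes by $2(|C|-1)\equiv0$; it multiplies $\colperm{j}$ by the transposition $(r\,s)$ for each of the $|C|$ columns $j$ used by $C$, so $\colpar$ changes by $|C|$; and it transposes two values of $\symperm{\ell}$ for each of the $|C|$ symbols $\ell$ in $C$, so $\sympar$ changes by $|C|$. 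Hence a switch on an \emph{odd} cycle effects exactly $\pi\mapsto\pi\oplus(0,1,1)$. I would use only the shortest odd cycles, namely length-$3$ cycles (the $2\times3$ row cycles), since these are the easiest to count.

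Let $N(L)$ be the number of length-$3$ row cycles of $L$ over all pairs of rows in $\{2,\dots,n\}$ and supported off column $1$. Switching is an involution on the set of pairs $(L,C)$ with $C$ such a cycle (the switch alters only the entries inside the columns of $C$, so a second switch on the same columns restores $L$), and it carries a pair with $L$ in class $\pi$ to a pair with $L'$ in class $\pi\oplus(0,1,1)$. Double counting the pairs $(L,C)$ with $L$ of parity $\pi$ therefore gives the \emph{exact} identity $\sum_{L\in R_n^{\pi}}N(L)=\sum_{L'\in R_n^{\pi\oplus(0,1,1)}}N(L')$. Writing $\mu=\mathbb E\,N$ for the mean over all reduced squares and $X,Y$ for the two paired classes, this rearranges to
\[
\big|\,|X|-|Y|\,\big|\,\mu\le\sum_{L\in R_n}\big|N(L)-\mu\big|=R_n\,\mathbb E\,|N-\mu|,
\]
so it suffices to prove the $L^1$-concentration $\mathbb E\,|N-\mu|=o(\mu)$; indeed then $|X|-|Y|=o(R_n)$, which is the additive input to the linear computation above. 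By Cauchy--Schwarz $\mathbb E\,|N-\mu|\le(\operatorname{Var}N)^{1/2}$, so the entire theorem reduces to the variance bound $\operatorname{Var}N=o(\mu^2)$.

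That variance bound is where I expect the real difficulty to lie. Since a length-$3$ row cycle is a fixed small configuration occupying two rows, three columns and three symbols, its count $N$ should be amenable to the method of moments driven by the asymptotic enumeration of Latin squares containing a prescribed small partial Latin square: heuristically a random pair of rows behaves like a random fixed-point-free permutation (mean number of $3$-cycles $\tfrac13$) over $\Theta(n^2)$ pairs, giving $\mu=\Theta(n^2)$, and the dominant part of $\mathbb E[N(N-1)]$ should come from vertex-disjoint pairs of configurations, which are asymptotically independent and contribute $(1+o(1))\mu^2$. The hard part is to show that the remaining, overlapping pairs -- two row cycles sharing a row, a column or a symbol -- contribute only $o(\mu^2)$; this is precisely the point at which one must invoke, with care, the available asymptotic formulas for the number of Latin squares compatible with a fixed small subconfiguration, and it is the crux of the whole argument.
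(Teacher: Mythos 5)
Your structural reduction is the same as the paper's: switching an odd row cycle that avoids the first row and first column flips $\colpar$ and $\sympar$ while fixing $\rowpar$, and once $R_n^{\pi}$ and $R_n^{\pi\oplus(0,1,1)}$ are known to differ by $o(R_n)$, the identities of \Tref{T:relat} force all four admissible classes to be $\sim\frac14R_n$. (Minor bookkeeping: for odd $n$ the passage to $L_n$ cannot go through $L_n=n!\,(n-1)!\,R_n$, which \cite{SW12} gives only for even $n$; you need the exact equalities $L_n^{000}=L_n^{011}=L_n^{101}=L_n^{110}$, etc., for odd $n$.) The genuine gap is that you have off-loaded the entire analytic content of the theorem onto the unproved bound $\operatorname{Var}N=o(\mu^2)$ for the number $N$ of switchable $3$-cycles. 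This is not a routine second-moment computation: to get $\mathbb{E}[N^2]\le(1+o(1))\mu^2$ you must show that the joint probability that two \emph{disjoint} pairs of rows each carry a prescribed $3$-cycle equals $(1+o(1))$ times the product of the individual probabilities, uniformly enough to sum over $\Theta(n^4)$ choices; bounding only the overlapping pairs, as you propose, is not sufficient. The tools actually available --- the ratio estimates of \cite{CGW08}, which control $|S(\mu,F)|/|S(\lambda,F)|$ only up to constant factors such as $2/3$ and $3/2$ --- are nowhere near accurate enough to deliver multiplicative $1+o(1)$ correlation decay, and no asymptotic enumeration of Latin squares containing a prescribed pair of small configurations with that precision is cited or established. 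So the crux of your argument is exactly the step you flag as ``where the real difficulty lies,'' and it remains open in your write-up.

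The paper avoids needing concentration altogether. It requires only that, with probability $1-o(1)$, a random (reduced) Latin square has \emph{at least one} odd cycle in its last two rows avoiding column $1$; switching a canonically chosen such cycle (say the one through the least admissible column, a choice preserved by the switch since switching does not alter which columns form which cycle) is then a fixed-point-free, column- and symbol-parity-reversing involution defined on all but $o(R_n)$ of the reduced squares, giving $|R_n^{\cels}-R_n^{\cols}|=o(R_n)$ and $|R_n^{\sels}-R_n^{\sols}|=o(R_n)$ directly. That existence statement is exactly what constant-factor ratio bounds can deliver: the last two rows w.h.p.\ have no cycle longer than $n-\log n$ and fewer than $9\sqrt m$ cycles, hence some cycle of length $z\iwb$; if all cycles were even, summing the lower bound $|S(\mu,F)|\ge\tfrac{2z}{3a(z-a)}|S(\lambda,F)|$ over the $\Theta(z)$ ways of splitting that cycle into two odd cycles yields a factor $\Omega(\log z)$, so the all-even event has probability $O(1/\log m)=o(1)$. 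If you want to salvage your route you should replace the variance bound by a one-sided comparison of this kind; as written, the proof is incomplete at its decisive step.
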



In contrast, the Alon-Tarsi conjecture \cite{AT92} 
asserts that $L_n^{\els}\ne L_n^{\ols}$ for even $n$. 
Two distinct generalisations of this are by
Zappa~\cite{Zap97}, who suggests that
$U_n^{\els}\ne U_n^{\ols}$ for all $n$ and Stones and Wanless~\cite{SW12}
who suggest that
$R_n^{\els}\ne R_n^{\ols}$ for all $n$.
These conjectures are only known to be true in some very special
cases (see \cite{Kot12b,Sto12,SW12} for details).

There is a natural action of $\sym_n\times\sym_n\times\sym_n$ on Latin
squares called {\em isotopism}.  Its orbits are called {\em isotopism
  classes}.  In essence, the reason that the Alon-Tarsi conjecture is
restricted to even orders is that parity is an isotopism class
invariant for even orders but not for odd orders.  Since it is known
that asymptotically almost all Latin squares have trivial stabiliser
in the group of isotopisms \cite{MW05}, a corollary of
\tref{t:asymequal} is that for even $n\iwb$ there are asymptotically
equal numbers of isotopism classes of Latin squares of each of the
possible parities.

\section{Parities are equinumerous}

Whenever we use the word ``random'' it will be implicit that we are
referring to the discrete uniform distribution (technically, actually a
sequence of such distributions as $n\iwb$).  

A {\em row cycle of length $\ell$} is a minimal (in the sense of
containment) non-empty $2\times\ell$ submatrix of a Latin square such
that each row of the submatrix contains the same symbols. We say that
a row cycle is {\em even} or {\em odd} depending on whether its length
$\ell$ is even or odd, respectively.  The two rows within a row cycle
can be switched to give a slightly different Latin square.  By
switching an odd row cycle we change the column parity and the symbol
parity, while leaving the row parity unchanged \cite{Wan04}. This
simple observation will be the key to our result.

Our aim is to find an odd row cycle that does not meet
the first row or first column.  We want to show that a random reduced
Latin square can be expected to have such a cycle. However, we do this
by first showing that a random Latin square also has such a
cycle. This allows us to employ techniques that may move beyond the
set of reduced Latin squares.
The techniques in question were developed in \cite{CGW08} to study
row cycle lengths in a random Latin square.  It will suit us to adapt
the results of \cite{CGW08}, which dealt with the first two rows, to
the last two rows instead. Similarly, \cite{CGW08} allowed
conditioning on the contents of a set $F$ of columns.  In that paper
$F$ was the last $n-m$ columns, however it will suit us to use a
variable set of columns which includes the first column. The results
from \cite{CGW08} apply unchanged, given the symmetry between
different columns, and between different rows.

We will consider random Latin squares of order $n$ as $n\iwb$. Soon we
will want to consider probabilities that are conditional on our Latin
square including a set $F$ of $n-m$ columns that includes the first
column. A prerequisite for the methods of \cite{CGW08} is that $F$
must contain entire row cycles in its last two rows. We impose the
extra condition that $F$ contains a single row cycle in its last two
rows. With this assumption it turns out that $F$ is unlikely to be too
big:

\begin{lemma}\label{l:nolongcyc}
With probability $1-o(1)$ a random Latin square of order $n$
has no cycle longer than $n-\log{n}$ within the last two rows.
\end{lemma}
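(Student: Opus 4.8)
The plan is to estimate the probability that a random Latin square has a long row cycle in its last two rows and show this probability tends to $0$. The starting point is to understand the structure of row cycles in the last two rows: if we read off the permutation that sends each column entry in the second-to-last row to the corresponding entry in the last row, then the row cycles correspond exactly to the cycles of this permutation. So the question becomes: what is the distribution of the longest cycle of this induced permutation, and in particular how likely is a cycle of length at least $n-\log n$?

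First I would set up the combinatorial count. Fix a target cycle length $\ell \ge n-\log n$, and fix a specific set of $\ell$ columns together with a specific cyclic arrangement of symbols forming a single row cycle of length $\ell$ on those columns in the last two rows. The probability that a random Latin square realises this particular row cycle is the ratio of the number of Latin squares extending that configuration to the total number $L_n$ of Latin squares. The key inputs are enumerative: I would use the standard bounds on $L_n$ (for instance van der Waerden / Egorychev–Falikman type estimates for permanents, or the known asymptotics for the number of Latin squares) to control both numerator and denominator. The numerator counts completions of two prescribed rows (forming the given cycle structure) to a full Latin square, and there are well-developed estimates, precisely the kind developed in \cite{CGW08}, for the number of ways to fill in the remaining $n-2$ rows given the top two.

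Next I would union-bound over all choices of the long cycle. The number of ways to choose which columns and which symbols participate in a cycle of length $\ell$, together with the cyclic order, contributes a combinatorial factor of roughly $\binom{n}{\ell}^2 (\ell-1)!\,$ (choosing the $\ell$ columns, the $\ell$ symbols, and a cyclic arrangement), and this must be multiplied by the per-configuration probability above. The whole point is that although there are many candidate long cycles, each is individually extremely unlikely, because forcing a near-complete cycle in two fixed rows is a strong constraint. I would show the product of the counting factor and the single-configuration probability, summed over all $\ell$ in the range $[n-\log n, n]$, is $o(1)$.

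The main obstacle will be obtaining sufficiently sharp asymptotic control on the ratio of completion counts, i.e. showing that prescribing a row cycle covering all but $O(\log n)$ columns reduces the number of completions by a factor that beats the entropy of the union bound. This is a delicate second-order estimate: crude permanent bounds give the leading exponential behaviour of $L_n$, but here I need the relative cost of imposing a long-cycle constraint, which is exactly where the machinery of \cite{CGW08} on row-cycle lengths is designed to help. I expect the cleanest route is to invoke those results directly (adapted to the last two rows and to conditioning on a column set $F$ containing the first column, as the surrounding text indicates is permissible by symmetry) rather than re-deriving the permanent estimates from scratch, so that the argument reduces to checking that the known tail bounds on cycle length already place negligible mass above $n-\log n$.
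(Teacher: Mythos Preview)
Your proposal sketches two routes but completes neither. The union-bound-over-configurations approach would require sharp control on the number of completions of two rows with a prescribed long cycle relative to $L_n$, and you correctly flag this as the delicate point—but you do not supply the estimate, and crude permanent bounds are unlikely to be tight enough to beat the entropy factor $\binom{n}{\ell}^2(\ell-1)!$ you wrote down. Your fallback, to ``invoke the results of \cite{CGW08} directly,'' is the right instinct, but as stated it is not a proof: you have not identified which result to use or what combinatorial observation makes it applicable here.

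The paper's argument is much shorter and rests on two concrete ingredients you are missing. First, one computes the probability $p$ that a \emph{uniformly random permutation} of $[n]$ has a cycle of length at least $n-\log n$; a direct count gives $p=\sum_{i\ge n-\log n}1/i=O((\log n)/n)$. Second—and this is the observation that makes the transfer work—if the multiset $\xi$ of row-cycle lengths in the last two rows contains a part of size at least $n-\log n$, then $\xi$ has at most $(\log n)/2+1$ parts in total, since the remaining $\le\log n$ columns split into cycles of length at least $2$. This cap on $|\xi|$ is precisely what is needed to apply the comparison principle \cite[Cor.\,4.5]{CGW08}, which bounds the Latin-square row-cycle probability by the random-permutation probability times a factor $n^{1/3}2^{|\xi|}$. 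Substituting $|\xi|\le(\log n)/2+1$ yields a bound of $n^{1/3}2^{(\log n)/2+1}p=o(n^{-0.3})$.

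So the missing step is not a completion-count estimate at all, but the transfer corollary from \cite{CGW08} together with the elementary observation that one long cycle forces few cycles overall, which is what tames the $2^{|\xi|}$ correction factor.
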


\begin{proof}
Let $p$ be the probability that a random permutation in $\sym_n$ has
a cycle of length at least $n-\log{n}$. Then 
\begin{equation}\label{e:randpermlongcyc}
p=\frac{1}{n!}\sum_{i=\lceil n-\log n\rceil}^n\binom{n}{
i}(i-1)!(n-i)!
=\sum_{i=\lceil n-\log n\rceil}^n\frac{1}{i}
=O\bigg(\frac{\log n}{n}\bigg).
\end{equation}
Let $\xi$ be the multiset of the lengths of the row cycles in the
last two rows of a random Latin square of order $n$.
If $\xi$ has an element of size at least $n-\log{n}$,
then $\xi$ has at most $(\log{n})/2+1$ elements. 
Hence by \eref{e:randpermlongcyc} and \cite[Cor.\,4.5]{CGW08}, 
the probability that $\xi$ has an element of size at least $n-\log{n}$
is at most $n^{1/3}2^{(\log{n})/2+1}p=o(n^{-0.3})$.
\end{proof}


As foreshadowed, we now wish to condition on a random Latin square $L$
containing a set $F$ of entries consisting of entire columns
(including the first), where in the last two rows the entries of
$F$ form a single row cycle.  
This framework is consistent with \cite{CGW08}.  Let $m$ be
the number of columns that are not in $F$. Let $\rho$ be the partition
of $m$ formed by the lengths of the row cycles in the last two rows
that are not in $F$.  We will consider $m$ and $\rho$ to be discrete
random variables in the resulting probability space. Our results will
be phrased in terms of $m$ and $\rho$ but are otherwise independent of
$F$.  Note that with high probability $m\iwb$ as $n\iwb$, by
\lref{l:nolongcyc}. From \cite[Thm~4.9]{CGW08}, we have:

\begin{lemma}\label{l:fewcycs}
There exists a constant $c$ with $0<c<1$ such that
$\rho$ has fewer than $9\sqrt{m}$ parts 
with probability $1-o(c^m)$.
\end{lemma}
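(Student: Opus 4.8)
The plan is to obtain this directly from \cite[Thm~4.9]{CGW08}, so the real content I would supply is a verification that the conditioned probability space constructed above is a faithful instance of the framework used in that paper, after which their bound applies verbatim. First I would confirm the three structural correspondences. The row--column symmetry of Latin squares lets me replace ``first two rows'' (the setting of \cite{CGW08}) by ``last two rows'' without altering any distribution, since permuting rows is a bijection on the set of Latin squares and hence preserves the uniform measure. Likewise, \cite{CGW08} conditions on the contents of a distinguished set of columns containing entire row cycles of the two rows; our set $F$ is required to contain entire row cycles in its last two rows (indeed a single one) and to include the first column, and permuting columns is again measure-preserving, so this is exactly their conditioning for a particular admissible $F$. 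Finally, the variables $m$ and $\rho$ are defined identically to the corresponding quantities in \cite{CGW08}, so the number of parts of $\rho$ is precisely the statistic that \cite[Thm~4.9]{CGW08} controls.

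Next I would record the probabilistic content that drives the estimate. The number of parts of $\rho$ equals the number of row cycles among the $m$ free columns, equivalently the number of cycles of the fixed-point-free permutation that the last two rows induce on those columns by following symbols. Having as many as $9\sqrt{m}$ parts forces an abundance of short row cycles, and completing a partial Latin rectangle so as to manufacture many short cycles is rare. Quantifying this is exactly where the permanent and switching estimates of \cite{CGW08} enter: they compare the row-cycle structure of two rows of a random Latin square with that of a random permutation and bound the number of completions realising each cycle type, yielding a tail that is exponentially small in $m$. I would then simply quote the resulting inequality to produce a constant $c\in(0,1)$ for which the probability that $\rho$ has at least $9\sqrt{m}$ parts is $o(c^m)$.

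The main obstacle lies entirely inside \cite[Thm~4.9]{CGW08}, namely establishing the exponential-in-$m$ decay of the probability of many row cycles, which rests on delicate counts of Latin rectangle completions with a prescribed two-row cycle type; I would treat this as a black box. From the present paper's standpoint the only genuine check is that the conditioning on $F$ respects every hypothesis of that theorem. The key point to verify is that $F$ contains \emph{entire} row cycles in the last two rows, which is guaranteed by the standing assumption that the $F$-entries form a single row cycle there, so that the $m$ free columns constitute a self-contained sub-configuration whose cycle structure is exactly the object \cite{CGW08} analyses; and that forcing the first column into $F$ is harmless, which follows from the column symmetry already invoked. Once these are in place no further estimation is required, and the bound of \lref{l:fewcycs} follows.
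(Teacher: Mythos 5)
Your proposal matches the paper's treatment: the paper likewise gives no independent argument for this lemma, simply quoting \cite[Thm~4.9]{CGW08} after noting (in the preceding setup paragraph) that the conditioning on $F$ is consistent with the framework of \cite{CGW08} and that the symmetry between rows and between columns lets the results transfer from the first two rows to the last two. Your additional checks that $F$ contains entire row cycles and that including the first column is harmless are exactly the verifications the paper relies on implicitly, so the approach is essentially identical.
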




Let $P(m)$ denote the partitions of $m$ into parts of size at least
$2$. Let 
\begin{equation}\label{e:gamma}
\gamma(\lambda)=\frac{m!}{\displaystyle\prod_{i=2}^m
\lambda_i!\,i^{\lambda_i}}
\end{equation}
be the number of derangements with cycle structure 
$\lambda=(2^{\lambda_2},3^{\lambda_3},\dots,m^{\lambda_m})\in P(m)$.
Here and henceforth, $i^{\lambda_i}$ denotes $\lambda_i$ parts of size $i$,
where we allow the possibility that $\lambda_i=0$.  
Let $S(\lambda,F)$ denote the set of
Latin squares that contain $F$ and have $\rho=\lambda$.

\begin{lemma}\label{l:mulam}
Let $m$ be even and suppose
that $\lambda=(2^{\lambda_2},4^{\lambda_4},\dots,m^{\lambda_m})\in P(m)$
has only even parts, including one of size $z$ where $z\iwb$
as $n\iwb$.  Let
$M$ be the set of $\mu\in P(m)$ such that 
$\mu$ is obtained from $\lambda$ by splitting 
a part of size $z$ into two parts of odd size.
Then $$\sum_{\mu\in M}|S(\mu,F)|=|S(\lambda,F)|\,\Omega(\log{z}).$$
\end{lemma}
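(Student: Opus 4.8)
The plan is to separate the combinatorial content from the analytic content: I would first rewrite both sides in terms of the derangement counts $\gamma$, using the enumeration techniques of \cite{CGW08} to argue that $|S(\lambda,F)|$ depends on $\lambda$ essentially only through $\gamma(\lambda)$, and then extract the factor $\Omega(\log z)$ from a harmonic sum over the ways of splitting the part of size $z$ into two odd parts. Throughout I would use that $m\iwb$ (which holds with high probability by \lref{l:nolongcyc}) together with the hypothesis $z\iwb$.

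The structural step is to fix the whole of the last two rows and count the number $N$ of ways to complete rows $1,\dots,n-2$ to a Latin square agreeing with $F$. Since $F$ consists of entire columns, only the upper entries of the $m$ free columns are free, and in each free column $c$ those entries must avoid exactly the pair $\{l_{n-1,c},l_{n,c}\}$; hence $N$ depends on the last two rows only through the unordered symbol pairs occurring in the free columns. Encoding these pairs as a $2$-regular graph $G$ on the $m$ free symbols (columns being edges), the cycles of $G$ are precisely the free row cycles, so $G$ has cycle type $\rho$. Writing $N(G)$ for the common completion count and $c(G)$ for the number of cycles, orienting each cycle in its two possible ways recovers all admissible pairs of free rows, so
\[
|S(\lambda,F)|=\sum_{G\text{ of type }\lambda}2^{c(G)}N(G),\qquad \sum_{G\text{ of type }\lambda}2^{c(G)}=m!\,\gamma(\lambda).
\]
The key input I would draw from the last-two-rows adaptation of \cite{CGW08} is that $N(G)$ is, uniformly over the graphs $G$ arising from $\lambda$ and from the partitions $\mu\in M$, constant up to a bounded factor; this is the statement that the number of completions is insensitive to the fine cycle structure of the last two rows. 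Granting this, the identities above give $|S(\lambda,F)|=\Theta(1)\,\overline N\,m!\,\gamma(\lambda)$ and $|S(\mu,F)|=\Theta(1)\,\overline N\,m!\,\gamma(\mu)$ for a single quantity $\overline N$ independent of the partition, reducing the lemma to the numerical claim $\sum_{\mu\in M}\gamma(\mu)=\Omega(\log z)\,\gamma(\lambda)$.

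For the numerical claim I would compute the ratio directly from \eref{e:gamma}. Splitting a part of size $z$ into odd parts $a,b$ with $a+b=z$ changes only the factors attached to $z$, $a$ and $b$; because $\lambda$ has only even parts we have $\lambda_a=\lambda_b=0$, and a short calculation gives $\gamma(\mu)/\gamma(\lambda)=\lambda_z z/(ab)$ when $a\neq b$ (and half this when $a=b$). Using the partial fraction $1/(ab)=(1/a+1/b)/z$ and summing over all odd $a+b=z$ then yields $\sum_{\mu\in M}\gamma(\mu)=\gamma(\lambda)\,\lambda_z\sum_{a\text{ odd},\,a<z}1/a$, and the harmonic sum over odd $a$ is $\sim\tfrac12\log z$; since $\lambda_z\geq1$ this is $\gamma(\lambda)\,\Omega(\log z)$, as required.

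I expect the uniformity of $N(G)$ to be the main obstacle: establishing that the number of completions is constant to within a bounded factor across the relevant cycle types is exactly the delicate permanent/concentration estimate that must be imported from \cite{CGW08}, and one must additionally verify that enough of the split configurations are typical, so that a few atypically small values of $N(G)$ cannot destroy the lower bound. By contrast, the completion identity and the harmonic-sum computation are routine once that input is in hand.
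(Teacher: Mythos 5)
Your proposal reaches the right conclusion by essentially the paper's route --- compare $|S(\mu,F)|$ to $|S(\lambda,F)|$ through the derangement counts $\gamma$ of \eref{e:gamma}, then extract $\Omega(\log z)$ from a harmonic-type sum over the odd splittings of $z$ --- but you have made the structural input substantially heavier than it needs to be. The paper does not establish (and does not need) your global statement that the completion count $N(G)$ is constant up to a bounded factor uniformly over all $2$-regular graphs $G$ of the relevant cycle types; what it uses is \cite[Lem~3.13]{CGW08}, a \emph{pairwise} switching estimate which, for $\mu$ obtained from $\lambda$ by a single split, gives directly
\[
|S(\mu,F)|\;\geq\;\frac{2}{3}\,\frac{\gamma(\mu)}{\gamma(\lambda)}\,|S(\lambda,F)|
\;=\;\frac{2z\lambda_z}{3a(z-a)}\,|S(\lambda,F)|.
\]
Since every $\mu\in M$ is one split away from $\lambda$, this one-sided, one-step bound is all that is required, and the ``delicate permanent/concentration estimate'' you flag as the main obstacle simply does not arise; you should cite the ratio lemma rather than attempt to re-derive a stronger uniform version of it (your identity $\sum_G 2^{c(G)}=m!\,\gamma(\lambda)$ also has a spurious factor of $m!$, though it would cancel in the ratios anyway). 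Your computation $\gamma(\mu)/\gamma(\lambda)=\lambda_z z/(ab)$, with the factor $\tfrac12$ when $a=b$, agrees with the paper's, and your partial-fraction evaluation $\sum 1/(ab)=z^{-1}\sum_{a\,\mathrm{odd}}1/a\sim(2z)^{-1}\log z$ is in fact cleaner than the paper's comparison of the sum $\sum_{a}1/\bigl((2a+1)(z-2a-1)\bigr)$ with an integral; the only correction needed there is to exclude $a=1$ (and $a=z-1$), since parts of size $1$ are not admissible in $P(m)$, which costs only a bounded amount and leaves the sum $\Omega(\log z)$.
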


\begin{proof}
Let $\mu=(2^{\mu_2},3^{\mu_3},\dots,m^{\mu_m})\in M$ be
such that $\mu$ is obtained from 
$\lambda$ by splitting one part of size $z$ into parts of size $a$ and
$z-a$, where $a$ is odd (and thus $z-a$ is too) and $a<z-a$. Since $\lambda$ has
only even parts, $\lambda_a=\lambda_{z-a}=0$ and
$\mu_a=\mu_{z-a}=1$. Moreover $\lambda_{z}=\mu_z+1\geq 1$.
By \eref{e:gamma},
$$\frac{\gamma(\mu)}{\gamma(\lambda)}=\frac{z\lambda_z}{a(z-a)}.$$
By \cite[Lem~3.13]{CGW08}, this implies that
$${|S(\mu,F)|}\geq \frac{2z\lambda_z|S(\lambda,F)|}{3a(z-a)}
\geq \frac{2z|S(\lambda,F)|}{3a(z-a)}.$$
Thus,
$$\sum_{\mu\in M}|S(\mu,F)|
\geq \frac{2}{3}|S(\lambda,F)|z\sum_{a=1}^{w}
\frac{1}{(2a+1)(z-2a-1)},$$
where $w=\lfloor(z-3)/4\rfloor$ is the largest integer satisfying $2w+1<z-2w-1$.
However, $1/((2x+1)(z-2x-1))$, is a decreasing function of $x$ for
$1\le x\le w$ so
\begin{align*}
\sum_{a=1}^{w}
\frac{1}{(2a+1)(z-(2a+1))}
& \geq \int_{1}^{w} \frac{dx}{(2x+1)(z-2x-1)} \\
& = \frac{1}{2z}\log\frac{(2w+1)(z-3)}{3(z-2w-1)}
= \Omega\left(\frac{\log{z}}{z}\right), 
\end{align*}
from which the result follows.
\end{proof}

We next show that with high probability there is an odd cycle that
does not meet the first row or column (assuming $n>2$). We deduce
this first for general Latin squares, then infer it for 
reduced Latin squares.

\begin{thrm}
With probability $1-o(1)$ there is a part of odd size in $\rho$.
\end{thrm}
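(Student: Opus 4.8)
The plan is to show that conditioning on a partition $\rho=\lambda$ with only even parts is an atypical event, by comparing the count $|S(\lambda,F)|$ of such configurations against the counts of nearby configurations having an odd part. The structure of the argument rests on \lref{l:mulam}, which already provides the core inequality: if $\lambda$ has a large even part of size $z$, then the total number of Latin squares obtained by splitting that part into two odd parts exceeds $|S(\lambda,F)|$ by a factor of $\Omega(\log z)$. The goal is to leverage this to prove that the \emph{all-even} partitions contribute a vanishing fraction of the probability mass.

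First I would set up the dichotomy. Fix the conditioning set $F$ and work in the probability space where $\rho$ is a random variable on $P(m)$. The event ``$\rho$ has no part of odd size'' means $\rho\in E(m)$, the all-even partitions of $m$. I want to bound $\Pr[\rho\in E(m)]=\sum_{\lambda\in E(m)}|S(\lambda,F)|\,/\sum_{\mu\in P(m)}|S(\mu,F)|$. For this I would argue that every all-even $\lambda$ can be charged against the partitions reachable by one odd-splitting move. The main technical point is that $\lambda$ must actually \emph{contain} a large even part so that \lref{l:mulam} applies with $z\iwb$; this is where \lref{l:fewcycs} enters, since with probability $1-o(c^m)$ the partition $\rho$ has fewer than $9\sqrt m$ parts, forcing at least one part to have size $\Omega(\sqrt m)\iwb$ (as $m\iwb$ with high probability by \lref{l:nolongcyc}). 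Thus conditionally on the high-probability event of few parts, every all-even $\lambda$ has a part of size $z$ growing with $n$.

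Next I would make the charging precise. For each all-even $\lambda$ with a large part of size $z$, \lref{l:mulam} gives $\sum_{\mu\in M(\lambda)}|S(\mu,F)|\geq|S(\lambda,F)|\cdot\Omega(\log z)$, where every $\mu\in M(\lambda)$ has at least one odd part and hence lies outside $E(m)$. The subtlety is that a single $\mu$ with odd parts might be charged by several different all-even $\lambda$, so the map from all-even partitions into their odd-split images is not injective; I would need to bound the multiplicity of this charging. Since each $\mu$ arises from $\lambda$ by \emph{merging} its two odd parts of sizes $a$ and $z-a$ back into a single even part, and $\mu$ has at most $9\sqrt m$ parts, the number of $\lambda\in E(m)$ mapping to a given $\mu$ is at most the number of ways to choose an ordered pair of odd parts of $\mu$, which is $O(m)$. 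Combining the $\Omega(\log z)=\Omega(\log m)$ gain per all-even partition against the $O(m)$ overcounting shows that $\sum_{\lambda\in E(m)}|S(\lambda,F)|$ is at most an $O(m/\log m)$ multiple of the count of partitions with an odd part, but this needs the gain to beat the overcount.

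The hard part will be reconciling the sizes: the per-term logarithmic gain from \lref{l:mulam} is weaker than the polynomial overcounting multiplicity, so a naive charging does not immediately give $o(1)$. To fix this I would refine the argument by iterating or by being more careful about the total mass: rather than charging each $\lambda$ once, I would observe that the inequality $|S(\mu,F)|\geq\frac{2z\lambda_z}{3a(z-a)}|S(\lambda,F)|$ can be summed so that the \emph{odd-part} mass dominates the all-even mass directly, using that for the typical $\lambda$ the largest part $z$ is $\Omega(m/\sqrt m)=\Omega(\sqrt m)$, making $\log z=\Omega(\log m)$ large while the number of parts (and hence the overcount) is controlled by \lref{l:fewcycs}. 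Formally I expect to condition on the event $A_m$ that $\rho$ has fewer than $9\sqrt m$ parts, write $\Pr[\rho\in E(m)]\le\Pr[\lnot A_m]+\Pr[\rho\in E(m)\mid A_m]$, bound the first term by $o(c^m)=o(1)$ via \lref{l:fewcycs}, and show the conditional term is $o(1)$ by the charging. Averaging over the distribution of $m$ and $F$ (recalling $m\iwb$ with probability $1-o(1)$ by \lref{l:nolongcyc}) then yields that $\rho$ has a part of odd size with probability $1-o(1)$, completing the proof.
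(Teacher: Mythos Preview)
Your overall strategy matches the paper's exactly, but you have manufactured a difficulty that is not there and then failed to resolve it. The overcounting multiplicity in the charging is not $O(m)$; it is exactly $1$. The point you overlook is that every $\mu\in M(\lambda)$ has \emph{precisely two} odd parts, because $\lambda$ was all-even and a single split was performed. Consequently, given such a $\mu$, there is only one way to merge a pair of odd parts, and this recovers $\lambda$ uniquely. Hence the double sum $\sum_{\lambda\in E'(m)}\sum_{\mu\in M(\lambda)}|S(\mu,F)|$ has no repeated $\mu$, and is bounded above by $\sum_{\mu\in P(m)}|S(\mu,F)|$.

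With this correction your charging argument goes through immediately: on the high-probability event of fewer than $9\sqrt m$ parts, each all-even $\lambda$ has a part of size $z\ge\sqrt m/9$, so \lref{l:mulam} gives $\sum_{\mu\in M(\lambda)}|S(\mu,F)|\ge c\log m\cdot|S(\lambda,F)|$ for an absolute constant $c>0$. Summing over such $\lambda$ and using multiplicity $1$ yields $\Pr[\rho\ \text{all-even and few parts}]=O(1/\log m)$. Combined with \lref{l:fewcycs} and \lref{l:nolongcyc} this gives the $o(1)$ bound with no need for iteration or any further refinement. Your bound of $O(m)$ on the multiplicity comes from allowing $\mu$ to have up to $9\sqrt m$ odd parts, which cannot occur for any $\mu$ in the image of the splitting map.
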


\begin{proof}
By \lref{l:nolongcyc} we know that $m\ge\log n$ with
probability $1-o(1)$. 
By \lref{l:fewcycs}, there asymptotically
almost surely are at most $9\sqrt{m}$ 
parts in $\rho$, so there is some part of size at least
$\sqrt{m}/9$. 
By \lref{l:mulam}, the probability of $\rho$ having no odd parts
is at most $O(1/\log m)=o(1)$, as claimed.
\end{proof}


\begin{corol}\label{cy:oddred}
With probability $1-o(1)$, in the last two rows of a random
reduced Latin square of order $n$ there is a cycle of odd length that
does not include the first column.
\end{corol}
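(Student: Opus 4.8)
The plan is to deduce this reduced-square statement from the theorem above (which concerns a random general Latin square) by combining an exact averaging identity with a symmetry that makes all pairs of non-initial rows interchangeable. Write $A$ for the event that rows $n-1$ and $n$ contain a row cycle of odd length that avoids the first column; the theorem says that a random Latin square satisfies $A$ with probability $1-o(1)$, and the goal is to establish the same for a random reduced square. I may assume $n\geq3$, so that rows $n-1,n$ both lie among rows $2,\dots,n$.

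First I would invoke the bijection behind the identity $L_n=n!\,(n-1)!\,R_n$: every Latin square is uniquely $\sigma\star\theta\star L''$, where $L''$ is reduced, $\sigma\in\sym_n$ is a symbol relabelling, and $\theta$ permutes the rows $2,\dots,n$. The lengths of the row cycles between two fixed rows, and which cycle meets a fixed column, are invariant under symbol relabelling and are merely transported by a row permutation; hence $A(\sigma\star\theta\star L'')$ is independent of $\sigma$ and coincides with the event that rows $\theta^{-1}(n-1),\theta^{-1}(n)$ of $L''$ contain an odd cycle avoiding the first column. Summing over the $n!$ symbol relabellings and the $(n-1)!$ row permutations, and using that each unordered pair $\{i,i'\}\subseteq\{2,\dots,n\}$ is realised by exactly $2(n-3)!$ choices of $\theta$, I would obtain the exact identity
\begin{equation*}
\Pr[A]=\frac{1}{\binom{n-1}{2}}\,\mathbb{E}[X],
\end{equation*}
where the expectation is over a uniform reduced square and $X$ counts the unordered pairs of rows in $\{2,\dots,n\}$ that contain an odd cycle avoiding the first column. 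The theorem then gives $\mathbb{E}[X]=(1-o(1))\binom{n-1}{2}$.

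The second ingredient is a symmetry of the reduced squares themselves. For a permutation $\alpha$ of $\{2,\dots,n\}$, extended to fix $1$, apply $\alpha$ simultaneously to the rows, the columns and the symbols. One checks directly that both the sorted first row and the sorted first column are preserved, so this defines a measure-preserving action of $\sym_{\{2,\dots,n\}}$ on the set of reduced squares. Because a column permutation fixing the first column conjugates the relevant permutation while fixing its cycle through the first column, and symbol relabelling and row permutation preserve cycle lengths, the image satisfies $A$ exactly when rows $\alpha^{-1}(n-1),\alpha^{-1}(n)$ of the original have an odd cycle avoiding the first column. Letting $\alpha$ vary, every pair $\{i,i'\}\subseteq\{2,\dots,n\}$ carries the same probability $q$ of being good, so $\mathbb{E}[X]=\binom{n-1}{2}q$. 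Comparing with the identity above gives $q=\Pr[A]=1-o(1)$, and in particular the pair $\{n-1,n\}$ is good with probability $1-o(1)$, which is the corollary.

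I expect the main obstacle to be the bookkeeping of invariance: one must verify cleanly that ``contains an odd cycle avoiding the first column'' is preserved by each of symbol relabelling, row permutation, and column permutation fixing column $1$, while correctly tracking how the indices $n-1,n$ are carried along in each case. Once this invariance is secured and the coordinated $\sym_{\{2,\dots,n\}}$-action on reduced squares is identified, both the averaging identity and the equidistribution over pairs are routine.
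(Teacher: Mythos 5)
Your proof is correct, but it takes a genuinely different route from the paper's, and a noticeably longer one. The paper also exploits the $n!\,(n-1)!$-to-one correspondence between Latin squares and reduced Latin squares, but it chooses the normalisation that never touches the rows: permute the \emph{symbols} to sort the first column, then permute the \emph{columns} to sort the first row. Since the $(1,1)$ entry is already $1$ after the first step, the column permutation fixes column $1$; symbol relabelling moves no cells and column permutations fixing column $1$ preserve both cycle lengths in the last two rows and which cycle meets column $1$. So the event $A$ is literally invariant under the reduction map, each reduced square has exactly $n!\,(n-1)!$ preimages, and the probability transfers in one step with no averaging. Your decomposition $L=\sigma\star\theta\star L''$ instead permutes rows $2,\dots,n$, which scrambles the identity of ``the last two rows''; this forces you to introduce the counting identity $\Pr[A]=\binom{n-1}{2}^{-1}\mathbb{E}[X]$ and then a second ingredient, the diagonal conjugation action $(\alpha,\alpha,\alpha)$ on reduced squares, to equidistribute the ``good'' pairs. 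Both ingredients check out (the $2(n-3)!$ count is right, and the diagonal action does preserve reducedness and the property of a pair of rows), so the argument is sound. What your route buys is slightly more information --- every pair of non-first rows of a random reduced square is good with probability $1-o(1)$, not just the pair $\{n-1,n\}$ --- but at the cost of two lemmas where the paper needs none; if you only want the stated corollary, the paper's choice of normalisation is the more economical one.
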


\begin{proof}
We can reduce a Latin square by permuting the symbols so that the first
column is in order, then permuting the columns so that the first row is in
order. These operations do not affect whether the last two rows 
contain a cycle of odd length that does not include the first column
(note that the first column does not move). Also, each reduced Latin square
is produced the same number of times, namely $n!(n-1)!$ times, when the
above reduction is applied to all Latin squares. So reduced Latin squares
have the same probability of having the property of interest as general
Latin squares do.
\end{proof}

We are now in a position to prove our main result. As already noted,
by switching an odd row cycle we change the column parity and the
symbol parity.  Hence \cyref{cy:oddred} provides us with an
involution, which acts on all but a negligible fraction of reduced
Latin squares, and which reverses column parity and reverses symbol
parity. It follows that $R^{\cels}_n\sim R^{\cols}_n$ and
$R^{\sels}_n\sim R^{\sols}_n$. \Tref{T:relat} then tells us that,  
\begin{align*}
&R_n^{000}\sim R_n^{011}\sim R_n^{101}\sim R_n^{110} \sim \dfrac14 R_n\quad\hbox{\rm for }n\equiv0,1\pmod 4,\\
&R_n^{111}\sim R_n^{100}\sim R_n^{010}\sim R_n^{001} \sim \dfrac14 R_n\quad \hbox{\rm for }n\equiv2,3\pmod 4,
\end{align*}
as $n\iwb$. The remainder of \tref{t:asymequal} 
can then be easily deduced from \Tref{T:relat} and the following
additional observations. Replacing each row of a Latin square by its
inverse (when considered as a permutation) converts 
reduced Latin squares into normalised unipotent Latin squares and 
vice versa. Hence $R^{abc}_n=U^{acb}_n$ for all parities $\pi=abc$ and all
$n$. We also know two more facts from \cite{SW12}. Firstly, for even $n$,
\begin{equation*}
L^{P}_n = n!\, (n-1)!\, R^{P}_n = n!\, (n-1)!\, U^{P}_n,
\end{equation*}
whenever
$P \in \{\els,\ols,\rels,\rols,\cels,\cols,\sels,\sols\}$ or $P$ is any parity.
Secondly, for odd $n\ge3$,
\begin{equation*}
L_n^{000}=L_n^{011}=L_n^{101}=L_n^{110}\hbox{ and }
L_n^{111}=L_n^{100}=L_n^{010}=L_n^{001}.
\end{equation*}

\section{Concluding comments}

We have confirmed a conjecture from \cite{SW12} and explained why the
large components have comparable size in the switching graphs studied
in \cite{Wan04}. Our results do not explain why the components in 
the switching graphs in \cite{KMOW14} have comparable size. At this stage
we have no tools to study the lengths of cycles in random {\em symmetric}
Latin squares.

A much stronger result than \tref{t:asymequal} seems very likely
to be true.  By Wilf~\cite[p.209]{Wil94}, the proportion of
permutations in $\sym_n$ that have no odd cycles is
$2^{-n}n!/(n/2)!^2\sim \sqrt{2/(\pi n)}$. It follows immediately that
the proportion of derangements with no odd cycles is also
$O(n^{-1/2})$. 
Hence the proportion of derangements with at most one odd cycle is
$O(n^{-1/2})$ if $n$ is even and no more than
\begin{equation*}
\frac{(n{-}1)!}{n!}
+
\frac{O(1)}{n!}
\sum_{i=0}^{(n-3)/2}{n\choose 2i+1}
\frac{(2i)!(n-2i-1)!}{(n-2i-1)^{1/2}}
= \frac{1}{n}
+
O(1)\!\!\sum_{i=0}^{(n-3)/2}\frac{1}{(2i+1)(n-2i-1)^{1/2}}
\end{equation*}
if $n$ is odd. Approximating the sum by an integral, we find that 
for all $n$ the 
proportion of derangements with at most one odd cycle is $O(n^{-1/2}\log n)$. 
If \cite[Conj.\,6.1]{CGW08} holds then a similar statement would be true
about the cycles in the last two rows of a Latin square: namely
there would be at least two odd cycles with probability
$1-O(n^{-1/2}\log n)$.  At least one of these cycles is switchable in
the sense that it does not hit the first column.  Amongst the squares
with no switchable odd cycle in the last two rows, we can look for a
switchable odd cycle in rows $n-3,n-2$, then in rows $n-5,n-4$ and so
on up to, but not including, the first row. Switching the first
switchable cycle that we find in this way would give us an involution,
because switching cycles in rows $x$ and $y$ never affects the cycle
lengths between rows other than $x$ and $y$.  The domain of the
involution includes all reduced Latin squares that have any switchable
cycle in an appropriate pair of rows. It seems plausible that each pair
of rows would have a switchable cycle with something close to an
independent probability $1-O(n^{-1/2}\log n)$, meaning the
proportion of reduced Latin squares
outside the domain of our involution would be $O(n^{-cn})$
for some constant $c>0$. 
Hence for each given $n$ the numbers of reduced Latin squares with
each of the four possible parities are probably very much closer 
to each other than our work has demonstrated.

  \let\oldthebibliography=\thebibliography
  \let\endoldthebibliography=\endthebibliography
  \renewenvironment{thebibliography}[1]{%
    \begin{oldthebibliography}{#1}%
      \setlength{\parskip}{0.4ex plus 0.1ex minus 0.1ex}%
      \setlength{\itemsep}{0.4ex plus 0.1ex minus 0.1ex}%
  }%
  {%
    \end{oldthebibliography}%
  }

\end{document}